\begin{document}

\theoremstyle{plain} \newtheorem{theorem}{Theorem}[section]
\theoremstyle{plain} \newtheorem{lemma}[theorem]{Lemma}
\theoremstyle{plain} \newtheorem{proposition}[theorem]{Proposition}
\newtheorem{axioms}[theorem]{Axioms}
\newtheorem{corollary}[theorem]{Corollary}
\newtheorem{problem}{Problem}
\newtheorem{subproblem}[problem]{Subproblem}
\newtheorem{conjecture}[theorem]{Conjecture}
\newtheorem{conjecture*}[]{Conjecture}
\newtheorem{matheorem}[theorem]{Main Theorem}

\newcommand{\nr}{\refstepcounter{theorem}  
                   \noindent {\thetheorem .}}
\newcommand{\defi}{\medskip \noindent {\it Definition \nr} }
\newcommand{\defifin}{\medskip}
\newcommand{\eks}{\medskip \noindent {\it Example \nr} }
\newcommand{\eksfin}{\medskip}
\newcommand{\rem}{\medskip \noindent {\it Remark \nr} }
\newcommand{\remfin}{\medskip}
\newcommand{\obs}{\medskip \noindent {\it Observation \nr} }
\newcommand{\obsfin}{\medskip}
\newcommand{\note}{\medskip \noindent {\it Notation \nr} }
\newcommand{\notefin}{\medskip}
\renewcommand{\thefootnote}{\fnsymbol{footnote}}

\newcommand{\llabel}{\addtocounter{theorem}{-1}
\refstepcounter{theorem} \label}

\newcommand{\psp}[1]{{{\bf P}^{#1}}}
\newcommand{\psr}[1]{{\bf P}(#1)}
\newcommand{\op}{{\mathcal O}}
\newcommand{\opw}{\op_{\psr{W}}}
\newcommand{\go}{\op}

\newcommand{\ini}[1]{\text{in}(#1)}
\newcommand{\gin}[1]{\text{gin}(#1)}
\newcommand{\kr}{{\Bbbk}}
\newcommand{\pd}{\partial}
\renewcommand{\tt}{{\bf t}}


\newcommand{\coh}{{{\text{{\rm coh}}}}}


\newcommand{\modv}[1]{{#1}\text{-{mod}}}
\newcommand{\modstab}[1]{{#1}-\underline{\text{mod}}}

\newcommand{\sut}{{}^{\tau}}
\newcommand{\sumit}{{}^{-\tau}}
\newcommand{\til}{\thicksim}

\newcommand{\totp}{\text{Tot}^{\prod}}
\newcommand{\dsum}{\bigoplus}
\newcommand{\dprod}{\prod}
\newcommand{\lsum}{\oplus}
\newcommand{\lprod}{\Pi}

\newcommand{\La}{{\Lambda}}

\newcommand{\sirstj}{\circledast}

\newcommand{\she}{\EuScript{S}\text{h}}
\newcommand{\cm}{\EuScript{CM}}
\newcommand{\cmd}{\EuScript{CM}^\dagger}
\newcommand{\cmri}{\EuScript{CM}^\circ}
\newcommand{\cler}{\EuScript{CL}}
\newcommand{\clerd}{\EuScript{CL}^\dagger}
\newcommand{\clerri}{\EuScript{CL}^\circ}
\newcommand{\gor}{\EuScript{G}}
\newcommand{\gF}{\mathcal{F}}
\newcommand{\gM}{\mathcal{M}}
\newcommand{\gE}{\mathcal{E}}
\newcommand{\gD}{\mathcal{D}}
\newcommand{\gI}{\mathcal{I}}
\newcommand{\gP}{\mathcal{P}}
\newcommand{\gK}{\mathcal{K}}
\newcommand{\gL}{\mathcal{L}}
\newcommand{\gS}{\mathcal{S}}
\newcommand{\gC}{\mathcal{C}}
\newcommand{\gO}{\mathcal{O}}
\newcommand{\gJ}{\mathcal{J}}

\newcommand{\dlim} {\varinjlim}
\newcommand{\ilim} {\varprojlim}



\newcommand{\Kom}{\text{Kom}}


\newcommand{\EH}{{\mathbf H}}
\newcommand{\res}{\text{res}}
\newcommand{\Hom}{\text{Hom}}
\newcommand{\inhom}{{\underline{\text{Hom}}}}
\newcommand{\Ext}{\text{Ext}}
\newcommand{\Tor}{\text{Tor}}
\newcommand{\ghom}{\mathcal{H}om}
\newcommand{\gext}{\mathcal{E}xt}
\newcommand{\id}{\text{{id}}}
\newcommand{\im}{\text{im}\,}
\newcommand{\codim} {\text{codim}\,}
\newcommand{\resol}{\text{resol}\,}
\newcommand{\rank}{\text{rank}\,}
\newcommand{\lpd}{\text{lpd}\,}
\newcommand{\coker}{\text{coker}\,}
\newcommand{\supp}{\text{supp}\,}


\newcommand{\sus}{\subseteq}
\newcommand{\sups}{\supseteq}
\newcommand{\pil}{\rightarrow}
\newcommand{\vpil}{\leftarrow}
\newcommand{\rpil}{\leftarrow}
\newcommand{\lpil}{\longrightarrow}
\newcommand{\inpil}{\hookrightarrow}
\newcommand{\pils}{\twoheadrightarrow}
\newcommand{\projpil}{\dashrightarrow}
\newcommand{\dotpil}{\dashrightarrow}
\newcommand{\adj}[2]{\overset{#1}{\underset{#2}{\rightleftarrows}}}
\newcommand{\mto}[1]{\stackrel{#1}\longrightarrow}
\newcommand{\vmto}[1]{\stackrel{#1}\longleftarrow}
\newcommand{\eqv}{\Leftrightarrow}
\newcommand{\impl}{\Rightarrow}

\newcommand{\iso}{\cong}
\newcommand{\te}{\otimes}
\newcommand{\into}[1]{\hookrightarrow{#1}}
\newcommand{\ekv}{\Leftrightarrow}
\newcommand{\equi}{\simeq}
\newcommand{\isopil}{\overset{\cong}{\lpil}}
\newcommand{\equipil}{\overset{\equi}{\lpil}}
\newcommand{\ispil}{\isopil}
\newcommand{\vvi}{\langle}
\newcommand{\hvi}{\rangle}


\newcommand{\xd}{\check{x}}
\newcommand{\ortog}{\bot}
\newcommand{\tL}{\tilde{L}}
\newcommand{\tM}{\tilde{M}}
\newcommand{\tH}{\tilde{H}}
\newcommand{\tvH}{\widetilde{H}}
\newcommand{\tvh}{\widetilde{h}}
\newcommand{\tV}{\tilde{V}}
\newcommand{\tS}{\tilde{S}}
\newcommand{\tT}{\tilde{T}}
\newcommand{\tR}{\tilde{R}}
\newcommand{\tf}{\tilde{f}}
\newcommand{\ts}{\tilde{s}}
\newcommand{\tp}{\tilde{p}}
\newcommand{\tr}{\tilde{r}}
\newcommand{\tfst}{\tilde{f}_*}
\newcommand{\empt}{\emptyset}
\newcommand{\bfa}{{\bf a}}
\newcommand{\la}{\lambda}

\newcommand{\ome}{\omega_E}

\newcommand{\bevis}{{\bf Proof. }}
\newcommand{\demofin}{\qed \vskip 3.5mm}
\newcommand{\nyp}[1]{\noindent {\bf (#1)}}
\newcommand{\demo}{{\it Proof. }}
\newcommand{\demodone}{\demofin}
\newcommand{\parg}{{\vskip 2mm \addtocounter{theorem}{1}  
                   \noindent {\bf \thetheorem .} \hskip 1.5mm }}


\newcommand{\dl}{\Delta}
\newcommand{\cdel}{{C\Delta}}
\newcommand{\cdelp}{{C\Delta^{\prime}}}
\newcommand{\dlst}{\Delta^*}
\newcommand{\Sdl}{{\mathcal S}_{\dl}}
\newcommand{\lk}{\text{lk}}
\newcommand{\lkd}{\lk_\Delta}
\newcommand{\lkp}[2]{\lk_{#1} {#2}}
\newcommand{\del}{\Delta}
\newcommand{\delr}{\Delta_{-R}}
\newcommand{\dd}{{\dim \del}}

\renewcommand{\aa}{{\bf a}}
\newcommand{\bb}{{\bf b}}
\newcommand{\cc}{{\bf c}}

\newcommand{\pnm}{{\bf P}^{n-1}}
\newcommand{\opnm}{{\go_{\pnm}}}
\newcommand{\ompnm}{\omega_{\pnm}}

\newcommand{\pn}{{\bf P}^n}
\newcommand{\hele}{{\bf Z}}

\newcommand{\dt}{{\displaystyle \cdot}}
\newcommand{\st}{\hskip 0.5mm {}^{\rule{0.4pt}{1.5mm}}}              
\newcommand{\disk}{\scriptscriptstyle{\bullet}}

\def\CC{{\mathbb C}}
\def\GG{{\mathbb G}}
\def\ZZ{{\mathbb Z}}
\def\NN{{\mathbb N}}
\def\OO{{\mathbb O}}
\def\QQ{{\mathbb Q}}
\def\VV{{\mathbb V}}
\def\PP{{\mathbb P}}
\def\EE{{\mathbb E}}
\def\FF{{\mathbb F}}
\def\AA{{\mathbb A}}

\newcommand{\MaFigNog}{\hskip 1cm
\begin{picture}(200,105)
\put(40,4){$x_1^{a+b+1}$} \put(56,18){x}
\dottedline{6}(70,20)(94,20)
\put(104,18){x}
\put(116,18){x}
\put(131,21){\circle{6}}
\dottedline{6}(140,20)(150,20)
\put(156,21){\circle{6}}
\put(164,4){$x_2^{a+b+1}$}

\put(64,30){x}
\dottedline{6}(76,32)(92,32)
\put(96,30){x}
\put(108,30){x}
\put(123,33){\circle{6}}
\dottedline{6}(130,32)(140,32)
\put(148,33){\circle{6}}

\dottedline{6}(80,50)(90,65)
\dottedline{6}(130,50)(120,65)

\put(105,78){\circle{6}}
\put(101,88){$x_3^{a+b+1}$}

\end{picture}
}

\newcommand{\MaFigGape}{\hskip 2cm
\begin{picture}(200,100)
\put(40,12){$x_1^{4}$} \put(56,18){x}
\put(70,18){x}
\put(84,18){x}
\put(98,18){x}
\put(114,21){\circle{6}}
\put(122,14){$x_2^4$}

\put(63,30){x}
\put(77,30){x}
\put(93,33){\circle{6}}
\put(107,33){\circle{6}}

\put(70,42){x}
\put(86,45){\circle{6}}
\put(100,45){\circle{6}}

\put(79,57){\circle{6}}
\put(93,57){\circle{6}}

\put(86,69){\circle{6}}

\put(82,79){$x_3^4$}
\end{picture}
}

\newcommand{\MaFigEkstra}{\hskip 2cm
\begin{picture}(200,80)
\put(47,4){$x_1^{3}$} 
\put(115,6){$x_2^3$}

\put(63,10){x}
\put(77,10){x}
\put(91,10){x}
\put(107,13){\circle{6}}

\put(70,22){x}
\put(86,25){\circle{6}}
\put(100,25){\circle{6}}

\put(79,37){\circle{6}}
\put(93,37){\circle{6}}

\put(86,49){\circle{6}}

\put(82,59){$x_3^3$}
\end{picture}
}

\title [Geometric properties derived from generic initial spaces]
{Geometric properties derived from generic  initial spaces}
\author {Gunnar Fl{\o}ystad \and Mike Stillman}
\footnotetext{The second author partially supported by NSF Grant DMS-0311806.\\
2000 {\it Mathematics Subject Classification} : Primary 13P10.\\
{\it  Keywords} : Generic initial ideal, revlex order }
\address{ Matematisk Institutt \\ Johs. Brunsgt. 12 \\ 5008 Bergen \\ Norway }
\address{Dep. of Mathematics\\
          Cornell University\\
          Ithaca, NY 14853 \\
          USA}

\email{ gunnar@mi.uib.no \and mike@math.cornell.edu}

\begin{abstract}
For a vector space $V$ of homogeneous forms of the same degree
in a polynomial ring, we investigate what can be said about the
generic initial ideal 
of the ideal generated by $V$, from the form of the generic
initial space $\gin{V}$ for the revlex order. Our main result is 
a considerable generalisation of a previous result by the first author.
\end{abstract}

\maketitle

\section{Introduction}

Generic initial ideals for the revlex order is a basic invariant of 
a homogeneous ideal $I$ in a polynomial ring. The computational complexity of 
the ideal is reflected in this generic initial ideal, as well as algebraic
and geometric properties of the original ideal. 
For instance, this generic initial ideal has the same regularity as the original
ideal \cite{BaSt}. Also the original ideal is Cohen-Macaulay iff this generic
initial ideal is Cohen-Macaulay. In \cite{Gr}, M.Green shows that if $\gin{I}$
has no generator in degree $e$ and $I_{< e}$ is the subideal of $I$ generated
by elements of degree less than $e$, then the generic initial ideal of this 
subideal
is in fact equal to $\gin{I}_{<e}$, forcing strong conditions on the geometry
of the scheme defined by $I$. With such basic properties of the ideal reflected
in the generic initial ideal it is of considerable interest to understand more
of the what properties of the original ideal are reflected in the generic 
initial ideal for the revlex order.

We consider subspaces $V$ of homogeneous forms in 
a polynomial ring $\kr[x_1, \ldots, x_n]$ and investigate what geometric consequences one
may derive about the vanishing locus of $V$ from its generic initial space $\gin{V}$, for 
the revlex order. In particular our results will apply to the ideal generated
by the forms in $V$.

A result by the first author \cite{Fl}, says that if $V$ is a linear subspace of forms
of degree $a+b$ and its generic initial subspace for the revlex order has as basis of 
monomials, the strongly stable set of monomials of degree $a+b$ generated by 
$x_1^a x_m^b$, where $m \geq 3$, then 
the polynomials of $V$ have a common factor of degree $a$.

We give a generalisation of this, Theorem \ref{MaTheMa}, putting a considerably
weaker condition on $\gin{V}$ and from this deriving conclusions about the growth
of the Hilbert function of the ideal generated by $V$. 
For instance, the case when $m=n$ above is generalised as follows.  
The property we impose is that every monomial of 
$\gin{V}$ is a multiple of a monomial in $\gin{V} : x_n$. This enables us to 
conclude that the image in $\kr [x_1, \ldots, x_{n-1}]$ of the ideal generated by $V$,
and the ideal generated by $\gin{V}$, have the same Hilbert function.

In the end we consider the example where $\gin{V}$ has as basis the  monomials 
$x_1^3, x_1^2x_2, x_1x_2^2, x_1^2 x_3$ of $\kr [x_1, x_2, x_3]$. We give examples showing 
that in this
case the vanishing locus of $V$ may consist of six points, three points, or be empty.

\section{The results}

We first recall basic notions and properties of the revlex order
and initial ideals. Then we formulate the main results, and in 
the end we give some examples and open ends.

\subsection{The revlex order}
We consider the revlex order on monomials in the polynomial ring
$\kr [x_1, \ldots, x_n]$ over a field $\kr$ of characteristic zero. I.e. 
\[ x_1^{i_1} \cdots x_n^{i_n} > x_1^{j_1} \cdots x_n^{j_n} \]
if either the total degree of the former is less than the total degree
of the latter, or, if their total degrees are equal, for the largest
index $r$ such that $i_r$ and $j_r$ differ, we have $i_r < j_r$.

For an ideal $I$ in the polynomial ring, we get the initial ideal 
$\ini{I}$ generated by the largest monomials in each polynomial of $I$.
Also for any vector subspace $V$ of the polynomial ring, we get a 
subspace $\ini{V}$, generated as a vector space by the largest monomials
in the polynomials of $V$.

For a vector subspace $V$ of $\kr [x_1, \ldots, x_n]$, the image of $V$
by the natural map
\[ \kr [x_1, \ldots, x_n] \pil \kr [x_1, \ldots, x_{n-r}] \]
is denoted by $V_{|x_n,\ldots, x_{n-r+1}}$, and called the restriction
of $V$ to $\kr [x_1, \ldots, x_{n-r}]$. If $I$ is an ideal, its
restriction is again an ideal.

\medskip
Two standard facts about the revlex order are the following.
\begin{itemize}
\item[i.] $\ini{V_{|x_n}} = \ini{V}_{|x_n}$,

\item[ii.] $\ini{V:x_n} = \ini{V} : x_n$.
\end{itemize}
\medskip

\subsection{Generic initial spaces} If a sequence of polynomials are surrounded by
brackets $\langle, \rangle$ it will denote the vector space generated by
these polynomials.
If $g$ in $GL(\langle x_1, \ldots, x_n \rangle)$ is a change of coordinates,
there is an open subset of such $g$'s such that $\ini{g.V}$ is constant.
This initial space is called the generic initial space of $V$ and denoted
$\gin{V}$. It is a standard fact, see \cite{Ei}, or originally, \cite{Ga},
that $\gin{V}$ is strongly stable, i.e. if $m$ is a monomial in $\gin{V}$, 
$x_j$ divides $m$ and $i < j$, then $x_i m /x_j$ is also in $\gin{V}$.

Let us recall the following result by the first author \cite{Fl}.

\begin{theorem} \label{MaTheFl}
Let $V \sus S_{b+a}$ be a linear subspace such that
\[ \gin{V} = \langle x_1, \ldots, x_{m} \rangle ^b \cdot x_1^a \sus S_{b+a}, \]
where $m \geq 3$. Then there exists a polynomial $p$ in $S$ of degree $a$ such that $p$ 
is a common factor of the elements of $V$.
\end{theorem}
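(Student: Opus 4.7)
\medskip
My plan is to exhibit a common factor $p$ of degree $a$ for $V$. After replacing $V$ by $gV$ for a generic change of coordinates $g$, I may assume $\ini{V} = \gin{V}$; a common factor of $gV$ pulls back to one of $V$ via $g^{-1}$, so this costs nothing. Next I would reduce to the case $n = m$: since every monomial of $\gin V$ involves only $x_1, \ldots, x_m$, property (i) gives $\dim V_{|x_n, \ldots, x_{m+1}} = \dim \ini V = \dim V$, so the projection to $\kr[x_1, \ldots, x_m]$ is injective and preserves the hypothesis on the generic initial space. Assume henceforth $n = m \geq 3$.

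\medskip
I would then proceed by induction on $a$. The base $a = 0$ is vacuous. For the step $a \geq 1$, assume the case $a = 1$ has been handled, producing a common linear factor $\ell$ of $V$. Writing $V = \ell V'$ for some $V' \subseteq S_{a+b-1}$ of the same dimension as $V$, I observe that for generic $g$ one has $\ini(g\ell \cdot gV') = \ini(g\ell)\cdot \ini(gV') = x_1 \cdot \gin V'$, using that $\ini(g\ell) = x_1$ for generic $g$ and the multiplicativity $\ini(fh) = \ini f \cdot \ini h$ for a single polynomial times a polynomial (extended to a space via a dimension count). Consequently $\gin V' = \langle x_1, \ldots, x_m \rangle^b \cdot x_1^{a-1}$, and the inductive hypothesis yields $V' = p' W$ with $\deg p' = a - 1$, whence $V = (\ell p')W$ with $\deg(\ell p') = a$.

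\medskip
The crux is therefore the case $a = 1$: given $V \subseteq S_{b+1}$ with $\gin V = \langle x_1, \ldots, x_m\rangle^b \cdot x_1$ and $n = m \geq 3$, show that $V$ has a common linear factor. Geometrically this asserts that the base locus of $V$, as a linear system in $|\gO_{\PP^{n-1}}(b+1)|$, contains a hyperplane. My first attempt would be a secondary induction on $m$: restrict to $x_m = 0$ and examine $V_{|x_m}$, whose generic initial space is $\langle x_1, \ldots, x_{m-1}\rangle^b \cdot x_1$. If $m - 1 \geq 3$, the inductive hypothesis supplies a common linear factor $\bar\ell$ of $V_{|x_m}$, which I would then try to lift to a linear form $\ell \in S_1$ dividing every element of $V$, controlling the ambiguity in the lift by using property (i) at several generic hyperplanes together with the explicit combinatorial structure of $\gin V$.

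\medskip
The main obstacle is the genuinely new bottom of this secondary induction, $n = m = 3$, $a = 1$. Here $V$ is a $\binom{b+2}{2}$-dimensional subspace of $\kr[x_1, x_2, x_3]_{b+1}$, and I would attempt a direct argument: exploit the fact that $\gin V$ is far stronger than $\ini V$ (it constrains $\ini(g V)$ for \emph{all} generic $g$) to force algebraic relations among the reduced Gr\"obner basis of $V$, coming from the Koszul-type syzygies $x_3 \cdot (x_1^{b} x_2) - x_2 \cdot (x_1^{b} x_3) = 0$ and their analogues in degree $b+1$, and argue that these syzygies lift to an identity in $V$ forcing a common linear divisor. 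The hypothesis $m \geq 3$ must enter essentially at this step: for $m = 2$ one can write down $V$ with the prescribed $\gin V$ but no common factor, so any proof must genuinely use the third leading generator $x_1^{a+b-1}x_3$ to generate the Koszul relation that pins down $\ell$.
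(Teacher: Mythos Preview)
Your proposal is not a proof: you correctly isolate the crux as the case $n=m=3$, $a=1$, but then offer only a sketch (``exploit \ldots\ Koszul-type syzygies \ldots\ argue that these syzygies lift'') without actually carrying it out. The same goes for the lifting step in your secondary induction on $m$ (``I would then try to lift \ldots\ controlling the ambiguity''). So the two genuinely hard steps are both left as intentions. A further gap: in your reduction to $n=m$ you note that the projection is injective, but you do not explain why a common factor of the \emph{image} $V_{|x_n,\ldots,x_{m+1}}$ yields a common factor of $V$ itself; this needs the geometric fact that cutting the base locus by general hyperplanes preserves codimension, not just injectivity of the linear map.

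The paper's argument is entirely different and bypasses both the $m=3$ base case and the lifting problem. It deduces Theorem~\ref{MaTheFl} from the Main Theorem~\ref{MaTheMa}, whose key idea is a parametric differentiation trick: over the function field $\kr(\tt)$ one sets $h=\sum t_i x_i$ and observes that if $ph^r\in V_{\kr(\tt)}$ then $\partial/\partial t_i$ gives $px_i h^{r-1}\in V_{\kr(\tt)}$ modulo $(V_{\kr(\tt)}:h^r)\cdot h^r$, so the direct sum $\bigoplus_r (V_{\kr(\tt)}:h^r)_{|h}\oplus (I_{V_{\kr(\tt)}})_{|h}$ is an ideal. Specializing $h$ to the last variable and invoking Green's Theorem~\ref{MaTheGr} identifies $\ini{(I_V)_{|x_n,\ldots,x_{n-r}}}$ with $\gJ(\ini V_{|x_n,\ldots,x_{n-r+1}})_{x_{n-r}}$ in degrees $\geq d$. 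For $\gin V=x_1^a\langle x_1,\ldots,x_m\rangle^b$ this ideal $\gJ$ is simply $(x_1^a)$, of codimension~$1$; the hypothesis $m\geq 3$ enters precisely as the inequality $1\leq m-2$ guaranteeing that the general linear section still has nonempty codimension-$1$ vanishing locus, hence so does $V$, hence $V$ has a common factor. One then iterates (much as in your induction on $a$) to push the degree of the factor up to $a$. No separate treatment of $m=3$, and no Koszul analysis, is needed.
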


The theorem we prove here is a considerable generalisation of this. First
let us give an illustrative example of this generalisation.

\eks Suppose $V$ is a subspace of $\kr[x_1, x_2, x_3]_{a+b+1}$ such that $x_1^ax_2^{b+1}$ is 
not in $\gin{V}$, but $x_1^{a+1}x_2^{b-1}x_3$ is in $\gin{V}$ (and so also $x_1^{a+1}x_2^b$).
This means that $\gin{V}$ has the form given in the following figure, where an x denotes
that the monomial is in $\gin{V}$, and a circle that it is not.

\MaFigNog

\noindent By the result of the first author, Theorem \ref{MaTheFl}
if $\gin{V}$ is $x_1^{a+1}\cdot \langle x_1,x_2,x_3 \rangle^b$ then $V$ has a common factor $p$ of degree
$a+1$. But this assumption on $V$ is very strong. The assumption above is much weaker
and we shall show that it still allows us to conclude that $V$ has a common factor.
\eksfin

In addition to the theorem above, the following, \cite[Prop.28]{Gr}, 
due to M.Green is a main inspiration for what we prove.

\begin{theorem} \label{MaTheGr}
Let $I$ be an ideal in $\kr[x_1, \ldots, x_n]$ 
generated in degrees $\leq d$. 
Suppose $\ini{I}$ is strongly stable and has no generator in degree $d$.
Then $\ini{I}$ is generated in degree $<d$.
\end{theorem}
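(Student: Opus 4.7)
The plan is to induct on the number of variables $n$, writing $J = \ini{I}$ and reducing via the two revlex facts i and ii to the restriction $I_{|x_n}$ (in $n-1$ variables) and the colon $I:x_n$. The engine is the following decomposition by divisibility by $x_n$, valid for any strongly stable monomial ideal $J \sus S = \kr[x_1,\ldots,x_n]$ and any degree $e$:
\[
J_e \;=\; x_n \cdot (J:x_n)_{e-1} \;\oplus\; (J_{|x_n})_e, \qquad S_1 \cdot J_{e-1} \;=\; x_n \cdot J_{e-1} \;\oplus\; S'_1 \cdot (J_{|x_n})_{e-1},
\]
where $S'_1 = \langle x_1,\ldots,x_{n-1}\rangle$. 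The second identity genuinely uses strong stability: if $x_i m \in S_1 J_{e-1}$ with $i<n$ and $x_n \mid m$, write $m = x_n m'$; strong stability applied to $m \in J$ (replacing its factor $x_n$ by $x_i$) yields $x_i m' \in J_{e-1}$, so $x_i m = x_n(x_i m') \in x_n J_{e-1}$. Subtracting dimensions,
\[
\dim J_e - \dim S_1 J_{e-1} = \bigl(\dim(J:x_n)_{e-1} - \dim J_{e-1}\bigr) + \bigl(\dim(J_{|x_n})_e - \dim S'_1(J_{|x_n})_{e-1}\bigr),
\]
a sum of two nonnegative terms. Hence the absence of a minimal generator of $J$ in degree $e$ is equivalent to the simultaneous vanishing of both terms.

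Applied at $e = d$, the hypothesis forces both terms to vanish. The vanishing of the second says that $J_{|x_n}$ has no minimal generator in degree $d$. Since $J_{|x_n} = \ini{I_{|x_n}}$ by fact i is strongly stable, and $I_{|x_n}$ is an ideal in $n-1$ variables generated in degrees $\leq d$, the outer induction on $n$ gives that $J_{|x_n}$ has no minimal generator in any degree $\geq d$; thus the second summand vanishes for all $e \geq d$. The vanishing of the first summand at $e = d$ gives the base case $(J:x_n)_{d-1} = J_{d-1}$ for an inner induction on $e$.

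For the inner induction, I would propagate $(J:x_n)_{e-1} = J_{e-1}$ upward from $e = d$. The inputs are: fact ii, yielding $\dim(J:x_n)_f = \dim(I:x_n)_f$; the short exact sequence $0 \to (I:x_n)(-1) \xrightarrow{x_n} I \to I_{|x_n} \to 0$ giving $\dim I_{e+1} = \dim(I:x_n)_e + \dim(I_{|x_n})_{e+1}$; the hypothesis $\dim I_{e+1} = \dim S_1 I_e$ from $I$ being generated in degrees $\leq d$; and the already-established vanishing of the second summand. Combining these with the decomposition of $S_1 J_e$ computed above equates $\dim J_{e+1}$ with $\dim(J:x_n)_e + \dim S'_1(J_{|x_n})_e$, which matches the exact-sequence sum only if $(J:x_n)_e = J_e$, closing the induction.

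The main obstacle is controlling the equality $\dim S_1 I_e = \dim S_1 J_e$, which a priori is only an inequality $\dim S_1 I_e \geq \dim S_1 J_e$ (since $\ini{S_1 I_e} \sups S_1 \ini{I_e}$). Forcing it to an equality is the substantive content of the theorem, achieved through the combined effect of the two revlex facts (which pin down $\dim(I:x_n)_e$ and $\dim(I_{|x_n})_{e+1}$), the clean $x_n$-decomposition of $J_{e+1}$ and $S_1 J_e$ afforded by strong stability, and the joint induction on $n$ and $e$ anchored in the base case provided directly by the hypothesis.
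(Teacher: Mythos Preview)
The paper does not prove this theorem itself: it attributes the result to Green \cite[Prop.~28]{Gr} and only remarks that Green's argument goes through once $\ini{I}$ is assumed strongly stable. So there is no in-paper argument to compare against; what remains is whether your proof stands on its own.

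It does not quite. The decomposition formula and the outer induction on $n$ (handling the second summand via $I_{|x_n}$) are correct. The gap is in the inner induction: from $(J:x_n)_{e-1}=J_{e-1}$ you must deduce $(J:x_n)_e=J_e$, and the identities you list do not yield this. Concretely, your decomposition of $J_{e+1}$ gives $\dim J_{e+1}=\dim(J:x_n)_e+\dim(J_{|x_n})_{e+1}$, while the short exact sequence gives $\dim I_{e+1}=\dim(I:x_n)_e+\dim(I_{|x_n})_{e+1}$; by facts i and ii these are the \emph{same} numerical relation, so combining them is a tautology. The extra input $\dim I_{e+1}=\dim S_1 I_e$ only helps if you can equate $\dim S_1 I_e$ with $\dim S_1 J_e$, and you yourself flag this as ``the main obstacle'' and ``the substantive content of the theorem'' --- but then assert it follows from ``the combined effect'' of the listed ingredients without showing how. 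It does not: the inner hypothesis gives only $\dim S_1 I_{e-1}=\dim S_1 J_{e-1}$, and nothing in what you wrote promotes this to degree~$e$. This step is exactly where Green's argument does its real work, and it is missing here; as written, your proposal is a correct reduction to the crucial step rather than a proof of it.
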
  

Green shows this for the generic initial ideal $\gin{I}$. However
one readily sees that his argument holds under the conditions that
$\ini{I}$ is strongly stable.

\subsection{Main result.}
Now if $T$ is a subspace of $\kr [x_1, \ldots, x_n]$ spanned by monomials of degree $d$, 
and $T$ is strongly stable, then
\[ (T : x_n^r)_{|x_n} \cdot \langle x_1, \ldots, x_{n-1} \rangle \sus 
(T:x_n^{r-1})_{|x_n}.\]
Thus, if $I_T$ denotes the ideal generated by $T$, then 
\[ \oplus_{r=1}^d (T:x_n^r)_{|x_n} \oplus (I_T)_{|x_n} \]
becomes an ideal in $\kr [x_1, \ldots, x_{n-1}]$ which we denote by 
$\gJ(T)_{x_n}$.

\begin{matheorem} \label{MaTheMa} Let $V$ be a subspace of $\kr [x_1, \ldots, x_n]$ 
consisting of polynomials of degree $d$. Suppose $V$ is in general coordinates and 
let $I_V$ be the ideal generated by $V$.

a. If $(\ini{V}:x_n)\cdot \langle x_1, \ldots, x_n \rangle = \ini{V}$, 
i.e. the ideal 
$\gJ(\ini{V})_{x_n}$ has no generator in degree $d$, 
the initial ideal $\ini{(I_V)_{|x_n}}$ 
will be equal to this ideal in degrees $\geq d$

b. More generally, if the ideal $\gJ(\ini{V}_{|x_{n}, \ldots, x_{n-r+1}})_{x_{n-r}}$
has no generator in degree
$d$, the initial ideal $\ini{(I_V)_{|x_n, \ldots, x_{n-r}}}$ will be equal to this
ideal in degrees $\geq d$.

\end{matheorem}

\begin{proof}
Let $\kr(\tt)$ be the function field of $\kr [t_1, \ldots, t_n]$, and let $h = \sum_{i=1}^n t_i x_i$
in $\kr(\tt)[x_1, \ldots, x_n]$. Also let $V_{\kr(\tt)} = V \te_\kr \kr(\tt)$.

\medskip
\noindent{\it Claim.} The direct sum of spaces 
\begin{equation} \oplus_{r=1}^d (V_{\kr(\tt)} : h^r)_{|h} \oplus (I_{V_{\kr(\tt)}})_{|h} 
 \sus \kr(\tt)[x_1, \ldots, x_n]/(h). \label{MaLaKtx}
\end{equation}
is an ideal in the ring on the right side.

\medskip
This follows because
\[ \langle x_1, \ldots, x_n \rangle \cdot (V_{\kr(\tt)} : h^r)_{|h} \sus
(V_{\kr(\tt)} : h^{r-1})_{|h} . \]
To see this let $p$ be in $(V_{\kr(\tt)} : h^r)$. Then $ph^r$ is in $V_{\kr(\tt)}$. Now if a vector varies
in a vector space, here $V$, the derivative of the vector will also be in the vector space. Hence
differentiating with respect to $\pd/ \pd t_i$, we get 
\[ \pd p / \pd t_i \cdot h^r + p \cdot rx_ih^{r-1} \in V_{\kr(\tt)}, \]
and this gives that $p \cdot x_i$ is in $(V_{\kr(\tt)}:h^{r-1})_{|h}$. This shows the claim.

\medskip
Since we assume that $V$ is in general coordinates, we can assume that $h = x_n$ behaves
generically in the claim above. Thus
\begin{equation} 
I = \oplus_{r = 1}^d (V: x_n^r)_{|x_n} \oplus (I_V)_{|x_n} \sus \kr [x_1, \ldots, x_{n-1}] \label{MaLaIvx}
\end{equation}
is an ideal. Note that $\ini{I}$ is equal to $\gJ(\ini{V})_{x_n}$.

Now we claim that $\ini{I}$ is strongly stable. This is so in degrees $e \geq d$ because 
$\ini{I}_e = \ini{I_V}_{|x_n,e}$.
Since $V$ is in general coordinates, $\ini{I_V}$ is the generic initial ideal and so is strongly stable.
But then the restriction is also strongly stable.

Now $\ini{I}$ is also strongly stable in degrees $\leq d$. This follows by (\ref{MaLaIvx})
since $\ini{V}$ is strongly stable and so $\ini{V}:x_n^r$, and in the end the restriction is 
also strongly stable.

If therefore $\gJ(\ini{V})_{x_n}$ has no generators in degree $d$, we get by Greens Theorem \ref{MaTheGr}, 
that $\ini{I}$ is equal to $\gJ(\ini{V})_{x_n}$ and so
$\ini{(I_V)_{|x_n}}$ is equal to the latter in degrees $\geq d$.


\medskip
b. Let $W = V_{|x_n, \ldots, x_{n-r+1}}$. Since we are using the revlex order
\[ \ini{W} = \ini{V}_{|x_n, \ldots, x_{n-r+1}}. \]
Hence we are assuming that  that 
\[(\ini{W} : x_{n-r}) \cdot  \langle x_1, \ldots, x_{n-r} \rangle = \ini{W}.\]
But then by part a. we get 
\[ \gJ(\ini{W})_{x_{n-r}} = \ini{(I_W)_{|x_{n-r}}} \]
in degrees $\geq d$, and this gives b.
\end{proof}

\begin{corollary} 

a. If $\gJ(\ini{V})_{x_n}$ has codimension $c \leq n-2$, the vanishing locus of $V$ har codimension $c$
in ${\mathbb P}^{n-1}$. 

b. More generally, if $\gJ(\ini{V}_{|x_n, \ldots, x_{n-r+1}})_{x_{n-r}}$ has codimension $ c \leq n-r-2$, 
the vanishing locus of $V$ has codimension $c$ in ${\mathbb P}^{n-1}$. 
\end{corollary}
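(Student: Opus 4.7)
The plan is to use the Main Theorem to transfer the algebraic codimension of $\gJ$ to the geometric codimension of $X := V(V) \subseteq \mathbb{P}^{n-1}$ via a Bertini argument. For part a., the Main Theorem gives $\ini{(I_V)_{|x_n}} = \gJ(\ini{V})_{x_n}$ in degrees $\geq d$, so the two ideals share their Hilbert polynomial and hence codimension $c$; since an ideal and its initial ideal always have the same Hilbert function, the restricted ideal $(I_V)_{|x_n}$ itself has codimension $c$ in $\kr[x_1,\ldots,x_{n-1}]$.

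Geometrically, $(I_V)_{|x_n}$ cuts out the hyperplane section $X \cap H$ with $H = \{x_n = 0\} \cong \mathbb{P}^{n-2}$, which is generic because $V$ is in general coordinates. The classical Bertini statement says that a generic hyperplane meets each positive-dimensional irreducible component of $X$ in a subvariety of one less dimension (same codimension) and misses each $0$-dimensional component. The hypothesis $c \leq n-2$ forces $X \cap H$ to be nonempty, so $c$ equals the codimension in $\mathbb{P}^{n-1}$ of some positive-dimensional component of $X$; no other component can have smaller codimension, because a smaller-codimension positive-dimensional component would reduce $\codim(X \cap H)$ below $c$, while $0$-dimensional components have codimension $n-1 > c$. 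Hence $\codim X = c$.

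For part b., I apply part a. to $W = V_{|x_n, \ldots, x_{n-r+1}}$ in $\kr[x_1,\ldots,x_{n-r}]$, whose vanishing locus $X_W \subseteq \mathbb{P}^{n-r-1}$ equals $X \cap L$ with $L = \{x_n = \cdots = x_{n-r+1} = 0\}$ the generic codimension-$r$ linear subspace. The identification $\ini{W} = \ini{V}_{|x_n,\ldots,x_{n-r+1}}$ used in the proof of the Main Theorem rewrites the hypothesis as $\codim \gJ(\ini{W})_{x_{n-r}} = c \leq (n-r)-2$, so part a. gives $\codim X_W = c$ in $\mathbb{P}^{n-r-1}$. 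A final Bertini step---promoting a generic codimension-$(r+1)$ linear section back to $\mathbb{P}^{n-1}$, valid because $c \leq n-r-2$ keeps the section positive-dimensional---then yields $\codim X = c$.

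The main obstacle I foresee is the Bertini bookkeeping for reducible $X$: components of $X$ of dimension at most $r$ are missed by the generic codimension-$(r+1)$ linear subspace and so are invisible to the linear section, but one must check they cannot be responsible for a smaller overall codimension. This holds precisely because any such component has codimension at least $n-1-r$, while the hypothesis $c \leq n-r-2 < n-1-r$ ensures these invisible components have codimension strictly greater than $c$, so they do not disturb the computation.
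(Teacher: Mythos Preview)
Your proof is correct and follows essentially the same approach as the paper's: use the Main Theorem to identify the codimension of the restricted ideal $(I_V)_{|x_n}$ (respectively $(I_V)_{|x_n,\ldots,x_{n-r}}$) with that of $\gJ$, then use that the restricting linear forms are generic (Bertini) to transfer this to $\codim X$ in $\mathbb{P}^{n-1}$; the paper's argument is simply a terser version of yours, omitting your explicit treatment of possibly reducible $X$. One minor slip: in part b your linear subspace $L=\{x_n=\cdots=x_{n-r+1}=0\}$ has codimension $r$, not $r+1$, so the components missed by $L$ have dimension $\leq r-1$, i.e.\ codimension $\geq n-r$ --- but since $c\leq n-r-2<n-r$ your bookkeeping still goes through.
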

 
\begin{proof} 
a. Clearly $I_{V|x_n}$ and $\gJ(\ini{V})_{x_n}$ have the same codimension $c$. Since $c \leq n-2$,
the vanishing locus of $I_{V|x_n}$ will be nonempty in ${\mathbb P}^{n-2}$. 
Now since $V$ is in general coordinates, $x_n$ may be considered a general linear form and so
$c$ is also the codimension of the vanishing locus of $V$. 

b. Again $I_{V|x_n, \ldots, x_{n-r}}$ will have codimension $c$, and the vanishing locus will be nonempty in 
${\mathbb P}^{n-r-2}$. Since $V$ is in general coordinates, $x_n, \ldots, x_{n-r}$ may be considered
as general linear forms, and so $c$ is also the codimension of the vanishing locus of $V$. 

\end{proof}

Let us now show that Theorem \ref{MaTheFl} is also a corollary of the Main Theorem.

\begin{proof}[Proof of Theorem \ref{MaTheFl}]
Assume that $V$ is in general coordinates and let $m = n-r$. Then 
\[ \ini{V}_{|x_n, \ldots, x_{n-r+1}} : x_{n-r} = \langle x_1, \ldots, x_{n-r} \rangle ^{b-1} \cdot x_1^a. \]
Hence the conditions of part b. in the Main Theorem holds. Also since $\gJ(\ini{V}_{|x_n, \ldots, x_{n-r+1}})$
is the ideal generated by $x_1^a$, it will have codimension one. So if the codimension 
$1 \leq n-r-2 = 
m-2$, the vanishing locus of $V$ has codimension one and so there is a polynomial $p$ which is a
common factor of the forms in $V$. Clearly $\deg p \leq a$. If $\deg p = a^\prime < a$
we may write $V = p \cdot V^\prime$ where $V^\prime$ is a space of forms of degree $b+a-a^\prime$.
But then $\ini{V} = x_1^{a-a^\prime} \cdot \langle x_1, \ldots, x_m \rangle ^b$
and by applying the Main Theorem to $V^\prime$ we would again find a common factor. Continuing,
taking products of common factors, we would in the end have a polynomial of degree $a$ which is
a common factor.
\end{proof}



\subsection{Some examples.}
The Main Theorem is concerned with the situation that $\gJ(\ini{V})_{x_n}$ has no generators in 
degree $d$.
In the following examples we discuss situations where $\gJ(\ini{V})_{x_n}$ has 
one new generator in degree $d$ and a single generator in 
degrees less than $d$.

\eks Suppose $V$ is a subspace of $\kr[x_1, x_2, x_3]_3$ and $\gin{V}$ is given by the following
figure. 

\MaFigEkstra

\noindent So $\gin{V}$ is the stable subspace generated by $x_1^2x_3$ and $x_1x_2^2$.
Note that $\gJ(\ini{V})_{x_3}$ has one generator of degree $2$ and one generator of degree $3$.
We give three examples of $V$'s with this generic initial space.

\begin{theorem}
The following spaces $V$ will have $\gin{V}$ given by the figure above.
\begin{itemize}
\item[a.] $V = \langle x_1 q, x_2 q, x_3 q, p \rangle$ where $q$ is a quadratic form and $p$
is a cubic form not divided by $q$. Note that if $p$ and $q$ are relatively prime, the vanishing locus of $V$
is the complete intersection of these forms.
\item[b.] $V = \langle x_1(x_2^2+x_3^2), x_2(x_1^2 + x_3^2), x_3(x_1^2 + x_2^2), x_1x_2x_3 \rangle.$
The vanishing locus of $V$ consists of the three unit coordinate vectors.
\item[c.] $V = \langle x_1^3 + x_2^3 + x_3^3, x_1^2x_2 + x_2^2x_3 + x_3^2x_1, x_1x_2^2 + x_2x_3^2 + x_3x_1^2, x_1x_2x_3
\rangle .$ The vanishing locus of $V$ is empty.
\end{itemize}
\end{theorem}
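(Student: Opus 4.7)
My plan is to verify in each case that $\gin{V}$ equals the strongly stable subspace $T = \langle x_1^3,\, x_1^2x_2,\, x_1x_2^2,\, x_1^2x_3\rangle$ shown in the figure. Since $\dim V = 4$ in every case, $\gin V$ is a four-dimensional strongly stable subspace of $\kr[x_1,x_2,x_3]_3$. A short enumeration shows the only candidates are $T$ and $T' = \langle x_1,x_2\rangle^3 = \langle x_1^3,x_1^2x_2,x_1x_2^2,x_2^3\rangle$, and they are distinguished by the number of monomials not involving $x_3$ --- three for $T$, four for $T'$. By the revlex identity $\ini{V_{|x_3}} = \ini{V}_{|x_3}$ from Section~2.1, applied after a generic change of coordinates, this number equals $\dim V_{|\ell}$ for a generic linear form $\ell$. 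Hence in each case it suffices to exhibit, for generic $\ell$, a nonzero element of $V \cap \ell S_2$; this forces $\dim V_{|\ell}\le 3$ and therefore $\gin V = T$.

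For (a) this is immediate from $qS_1 \sus V$: the element $q\cdot\ell$ lies in $V \cap \ell S_2$ and is nonzero. For (b) I would set $\ell = ax_1+bx_2+cx_3$ with $abc\ne 0$ and verify by direct expansion that the explicit combination
\[ \tfrac{1}{a}\,x_1(x_2^2+x_3^2) + \tfrac{1}{b}\,x_2(x_1^2+x_3^2) + \tfrac{1}{c}\,x_3(x_1^2+x_2^2) + \tfrac{a^2+b^2+c^2}{abc}\,x_1x_2x_3 \]
is divisible by $\ell$; specializing to $a=b=c=1$ yields $(x_1+x_2+x_3)(x_1x_2+x_1x_3+x_2x_3)$, which is a useful sanity check. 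For (c) I would set up the divisibility of a general combination of the four listed generators of $V$ by $\ell = ax_1+bx_2+cx_3$ as a $4\times 4$ homogeneous linear system in the four unknown coefficients, with polynomial entries in $(a,b,c)$, and show that the determinant of this system vanishes identically.

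The delicate step is verifying this identical vanishing in (c), rather than just vanishing at one specialization. My approach would be to use the symmetric-group invariance of $V$ (which makes the determinant a symmetric polynomial in $(a,b,c)$) together with explicit solution of the system at specializations such as $\ell = x_1+x_2+x_3$ (where $(x_1+x_2+x_3)^3$ itself lies in $V$, giving a two-parameter family of solutions) and $\ell = x_1+x_2+2x_3$ (where the divisible cubic $(x_1+x_2+x_3)^3 + \tfrac12(x_1+x_2+x_3)(x_1x_2+x_1x_3+x_2x_3) + \tfrac12\,x_1x_2x_3$ produces a one-parameter family); combined with a symbolic or Macaulay2-assisted rank check at one generic point, these suffice. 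The parenthetical vanishing-locus claims then follow by direct inspection: (a) is a complete intersection when $p$ and $q$ are coprime; (b) the three coordinate points lie in the locus, and at any other point of $\PP^2$ one has $x_1x_2x_3\ne 0$ with at least one other generator nonzero; (c) vanishing of $x_1x_2x_3$ forces some coordinate to vanish, and a short case analysis on the remaining three generators rules out any such projective point.
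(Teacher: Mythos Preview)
Your reduction is the same as the paper's: both amount to showing that $V\cap \ell\,S_2\neq 0$ (equivalently $V:\ell$ is nonzero) for a generic linear form $\ell$. The paper phrases this as finding a $p$ over $\kr(\tt)$ with $ph\in V_{\kr(\tt)}$, where $h=t_1x_1+t_2x_2+t_3x_3$; your enumeration of the two four-dimensional strongly stable subspaces of $S_3$ is a clean alternative to the paper's observation that $x_1^2x_3\in\gin V$ already forces $\gin V=T$. For (a) the arguments coincide. For (b) your explicit combination is in fact exactly the paper's: the paper takes $p=t_1x_2x_3+t_2x_1x_3+t_3x_1x_2$ (the $2\times 2$ minors of $\left(\begin{smallmatrix}x_1&x_2&0\\0&x_2&x_3\end{smallmatrix}\right)$), and expanding $ph$ gives precisely your coefficients $bc,\,ac,\,ab,\,a^2+b^2+c^2$ on the four generators.

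The real difference is (c). The paper exhibits the explicit factor
\[
p=(t_1x_2+t_2x_3+t_3x_1)(t_1x_3+t_2x_1+t_3x_2),
\]
and one checks directly that the coefficients of the $\tt$-monomials in $ph$ span the four-dimensional $V$ given. This is cleaner than your proposed determinant computation; as written, your plan for (c) is not yet a proof---vanishing at two specializations together with a numerical rank check at a ``generic'' point does not establish identical vanishing, so you would have to carry out the genuine symbolic computation you allude to. A small quibble on (b): your vanishing-locus sentence is slightly garbled (there are non-coordinate points with $x_1x_2x_3=0$, e.g.\ $[1:1:0]$), though the intended case analysis is correct.
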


In particular c. is striking in that the vanishing locus is empty, even if $\gJ(\ini{V})$ has only two generators, one in degree $d-1 = 2$ and one in
degree $d=3$.

\begin{proof}
Note that if $\gin{V}$ has the form above, then $g.V : x_3$ is one-dimensional for a general
change of coordinates $g$. Conversely, when $V$ is a four-dimensional space of forms of degree $3$ with 
$g.V : x_3$ one-dimensional for a general change of coordinates, then $\gin{V}$ will contain
$x_1^2x_3$. Since it is strongly stable, it must be of the form in the figure above. 

Now let $\kr(\tt)$ be the function field of $\kr [t_1,t_2, t_3]$, let
$V_{\kr(\tt)} = V \te_{\kr} \kr(\tt)$ and let $h = t_1x_1 + t_2x_2 + t_3x_3$. We must find
a four-dimensional $V$ such that $(V_{\kr(\tt)} : h)$ is one-dimensional. Let this latter space
be generated by a form $p$ in $\kr(\tt)$. In $ph$, the coefficients of the monomials in $t_1,t_2,t_3$ 
must span a subspace of $\kr[x_1,x_2,x_3]$ which is at most four-dimensional.

\medskip
a. Suppose the $\tt$-degree of $p$ is zero. Then $p\cdot h$ is $t_1 px_1 + t_2 px_2 + t_3 p x_3$.
The coefficients here span a three-dimensional space. Adding a form $q$ of degree $3$ not contained
in $p \cdot \langle x_1, x_2, x_3 \rangle$, we get a space $V = \langle x_1p, x_2p, x_3p, q \rangle$
whose generic initial space must be given by the figure above.

b. Suppose the $\tt$-degree of $p$ is one. Then we may write $p$ as 
$t_1 p_1 + t_2 p_2 + t_3 p_3$ where the $p_i$ are polynomials in the $x$'s. The coefficients
of the monomials in the $t$'s are
\begin{equation}\label{MainLigtto} 
x_1 p_1,\; x_1p_2 + x_2 p_1,\; x_1 p_3 + x_3p_1,\; x_2p_2,\; x_2p_3 + x_3 p_2,\; x_3p_3, 
\end{equation}
That these span at most a four-dimensional subspace, means that there are at least two linear
relations between them, or at least two linear syzygies of $p_1, p_2,$ and $p_3$. But such
$p$'s may be found as $2\times 2$-minors of a $2\times 3$ matrix whose rows are these two 
linear syzygies. For instance we may take the matrix
\[ \begin{pmatrix} x_1 & x_2 & 0 \\  0 & x_2 & x_3 \end{pmatrix}. \]
This gives 
$p_1 = x_2x_3, p_2 = x_1x_3,$ and $p_3 = x_1x_2$. It is then readily checked that the six polynomials
in (\ref{MainLigtto}) span a four-dimensional vector space which is the one given in b.

c. Suppose the $t$-degree of $p$ is two. After a somewhat intricate consideration, we found
that 
\[ p=(t_1x_2 + t_2x_3 + t_3 x_1)(t_1x_3+t_2x_1 + t_3x_2) \]
gives an example. It is easy to verify that the coefficients of the $\tt$-monomials in $ph$ are given by
four forms generating $V$, given in c. 
\end{proof}

Note that in b. and c. the examples are invariant
under the cyclic group $C_3$.

\medskip
Let $p$ be a form of degree $e$ and $q$ a sufficiently general form of degree $d > e$. 
If $V = ( p \cdot \langle x_1, x_2, x_3 \rangle^{d-e}, q)$, then $\gin{V}$ 
is the strongly stable set of monomials generated by $x_1^ex_3^{d-e}$ and $x_1^{e-1}x_2^{d-e+1}$, 
one generator of degree $e$ and one of degree $d$.
One might hope that conversely, if $e$ is sufficiently smaller than
$d$, and $\gin{V}$ is the above strongly stable set, this would imply that $V$ has a codimension 
one subspace $W$ such that $W = p \cdot \langle x_1, x_2, x_3 \rangle^{d-e}$ for some $p$. 
This is equivalent to there existing a subspace $W$ such that $\gin{W}$ is the stable subspace
generated by $x_1^ex_3^{d-e}$.

\eks Suppose $V$ is a subspace of $\kr[x_1, x_2, x_3]_4$ and  $\gin{V}$ is given by the following figure.

\MaFigGape

\noindent So $\gin{V}$ is the strongly stable subset generated by $x_1^2x_3^2$ and $x_1x_2^3$.
Note that $\gJ(\ini{V})_{x_3}$ has one generator of degree $2$ and one generator of degree $4$.
Could we then conclude that $V$ has a subspace $W$ of codimension one such that $\gin{W}$ is 
the stable subspace generated by $x_1^2x_3^2$ ? (This would imply that the polynomials in $W$
has a common factor.) We do not know the answer to this. 
But suppose $V$ is in general coordinates, so we may speak 
of initial ideals instead of generic initial ideals.
The initial space $\ini{V :x_3}$ is 
\begin{equation} \langle x_1^3, x_1^2x_2, x_1^2x_3 \rangle, \label{MaLaInx1}
\end{equation}
the stable subspace generated by $x_1^2 x_3$.
If we knew that
this was the generic initial space of $V:x_3$, we could by the techniques of Theorem \ref{MaTheMa}
have found such a $W$. The problem is however, that even if $V$ is in general coordinates, it 
is not certain that $V:x_3$ is, so it is not clear that $\gin{V:x_3}$ is (\ref{MaLaInx1}).
In fact, given that the dimension of $V$ is seven, the form of $\gin{V}$ given in the figure above, 
is equivalent to $V:h^2$ being nonzero for a general $h$.

That $\gin{V:x_3}$ is given by (\ref{MaLaInx1}) 
is equivalent to $\gin{V:h_1}$ being given by (\ref{MaLaInx1})  for a general linear form $h_1$.
But this is again equivalent to $V : h_1h_2$ being nonzero, where $h_1$ and $h_2$ are general linear
forms. It may not be likely that for a seven dimensional space $V$ of forms,
the condition of $V : h_1h_2$ being nonzero for general linear 
forms $h_1$ and $h_2$,
is equivalent to the condition of $V:h^ 2$ being nonzero for a general 
linear form $h$.

\eksfin

\end{document}